\newtheorem{thm}{Theorem}[section]
\newtheorem{cor}[thm]{Corollary}
\newtheorem{exam}[thm]{Example}
\newtheorem{lem}[thm]{Lemma}
\newtheorem{prop}[thm]{Proposition}
\theoremstyle{definition}
\theoremstyle{remark}
\numberwithin{equation}{section}
\begin{document}

\title[]
{On a class of $\lambda$-hyponormal operators}

\author{\sc\bf Y. Estaremi, M. S. Al Ghafri and and S. Shamsigamchi}
\address{\sc Y. Estaremi}
\email{y.estaremi@gu.ac.ir}
\address{Department of Mathematics, Faculty of Sciences, Golestan University, Gorgan, Iran.}
\address{\sc M. S. Al Ghafri}
\address{Department of Mathematics, University of Technology and Applied Sciences, Rustaq {329},  Oman.}
\email{mohammed.alghafri@utas.edu.om}
\address{\sc S. Shamsigamchi}
\address{Department of Mathematics, Payame Noor(PNU) university, Tehran, Iran.}
\email{S.shamsi@pnu.ac.ir}
\address{}
\thanks{}

\thanks{}

\subjclass[2020]{47B20}

\keywords{Composition operator, $\lambda$-Hyponormal operator, Weakly Hypercyclic operator.}

\date{}

\dedicatory{}

\commby{}

\begin{abstract}
In this paper we define $\lambda$-hyponormal operators on an infinite dimensional Hilbert space $\mathcal{H}$ and find a class of $\lambda$-hyponormal operators that can not be hypercyclic.  Also, we study closedness of range and $\lambda$-hyponormality of weighted composition operators on the Hilbert space $\mathcal{H}=L^2(\mu)$. Moreover, we apply the hypercyclicity results to $\lambda$-hyponormal weighted composition operators. Finally, we provide some examples to illustrate our main results.

\end{abstract}

\maketitle

\section{ \sc\bf Introduction and Preliminaries}
Let $X$ be a complex topological vector space and $\mathcal{B}(X)$ be the algebra of all continuous linear operators on $X$.
For any $x\in X$ the orbit of $x$ under $T\in \mathcal{B}(X)$ is the set $\{T^n(x):n\geq 0\}$, and is denoted by $Orb(T,x)$.
The continuous operator $T\in \mathcal{B}(X)$ is called {\it hypercyclic} if there exists $x\in X$ such that $Orb(T,x)$ is dense in $X$. Such a vector $x$ is called a {\it hypecyclic} vector for $T$. Also, if there exists a vector $x\in X$ such that the set $\{\lambda T^nx:n\geq 0, \lambda\in \mathbb{C}\}$ is dense in $X$, then $T$ is called {\it supercyclic} and the vector $x$ is called a {\it sepercyclic} vector for $T$. Moreover, $T\in \mathcal{B}(X)$ is called {\it cyclic} if there exists $x\in X$ such that the linear span of $Orb(T,x)$ is dense in $X$. Such a vector $x$ is called a {\it cyclic} vector for $T$.\\
Cyclicity is connected to invariant subspace problem. We say a subspace $M$ of $X$ is invariant under $T\in \mathcal{B}(X)$ if $TM\subseteq M$. The invariant subspace problem asks that a continuous linear operator on a Banach space $X$ has a nontrivial, closed, invariant subspace or not. The ivariant subspace problem is important. It is easy to see that every operator on a finite dimensional complex Banach space with dimension at least two has nontrivial, closed, invariant subspace. But in infinite dimensional Banach spaces the invariant subspace problem is still open. By these observation we understand that a continuous linear operator $T$ on the Banach space $X$ has no nontrivial, closed, invariant subspace if and only if every non-zero vector $x\in X$ is a cyclic vector for $T$. For more information about cyclicity and hypercycility of hyponormal operators one can see \cite{bm, cr, sand}. \\

In this paper we are concern about hypercyclicity of a class of continuous(bounded) linear operators on Hilbert space $\mathcal{H}$, named $\lambda$-hyponormal operators. In section 2 we consider $\lambda$-hyponormal bounded linear operators on infinite dimensional Hilbert space $\mathcal{H}$ and find a class of $\lambda$-hyponormal operators that can not be hypercyclic.  Also, in section 3, we study closedness of range and $\lambda$-hyponormality of weighted composition operators on the Hilbert space $\mathcal{H}=L^2(\mu)$. Moreover, we apply the hypercyclicity results to $\lambda$-hyponormal weighted composition operators.

\section{\sc\bf Hyponormal and $\lambda$-hyponormal operators}

Let $\mathcal{H}$ be an infinite dimensional Hilbert space and $\mathcal{B}(\mathcal{H})$ is the algebra of all bounded linear operators on the Hilbert space $\mathcal{H}$. As is known in the literature, the operator $T\in\mathcal{B}(\mathcal{H})$ is called hyponormal if $T^*T\geq TT^*$. Here for the positive constant $\lambda>0$, we say $T$ is $\lambda$-hyponormal if $T^*T\geq \frac{1}{\lambda}TT^*$. For $\lambda=1$, $1$-hyponormal operators are hyponormal ones. In the following we recall a technical lemma in Banach spaces for later use. 

\begin{lem}[Lemma 4.2,\cite{sand}]\label{l2.1n}
Let $X$ be a Banach space and let $c>1$. If $\{x_n: n\geq 1\}$ is a set in $X$ such that $x_n\geq c^n$ for each $n$, then $\{x_n: n\geq 1\}$ is weakly closed in $X$. 
\end{lem}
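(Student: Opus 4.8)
The plan is to prove that the set $S=\set{x_n:n\ge 1}$, which under the stated hypothesis satisfies the norm bound $\norm{x_n}\ge c^n$, contains every point of its weak closure $\overline{S}^{\,w}$. The cleanest first reduction is to the behaviour of sequences. If $x_{n_k}\rightharpoonup y$ weakly, then by the uniform boundedness principle every weakly convergent sequence is norm bounded, so $\sup_k\norm{x_{n_k}}<\infty$. Since $\norm{x_n}\ge c^n\to\infty$, the indices $n_k$ can take only finitely many values; hence $\set{x_{n_k}}$ lies in a finite (thus weakly discrete) set and the sequence is eventually constant, giving $y=x_m\in S$. This already shows that $S$ is weakly sequentially closed and that no new point can arise as a weak limit of a sequence drawn from $S$.

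The remaining—and main—difficulty is that in an infinite dimensional space the weak topology is not metrizable, so weak closedness is strictly stronger than weak sequential closedness and cannot be read off from sequences alone. I would therefore argue directly with weak neighbourhoods. Fix $y\in\overline{S}^{\,w}$ and suppose $y\neq x_n$ for every $n$. Since the weak topology is Hausdorff, each finite set $\set{x_1,\dots,x_{N-1}}$ is weakly closed, and as $y$ does not belong to it we get $\overline{S}^{\,w}=\set{x_1,\dots,x_{N-1}}\cup\overline{S_N}^{\,w}$ with $y\in\overline{S_N}^{\,w}$ for the tail $S_N=\set{x_n:n\ge N}$. Thus it suffices to separate $y$, by a single weak neighbourhood, from an entire tail all of whose members satisfy $\norm{x_n}\ge c^N$.

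The crux is then to manufacture a functional $g\in X^{*}$ (or a finite family of them) that is bounded away from $g(y)$ along the tail. Here the geometric growth is what one must exploit: taking norming functionals $f_n$ with $\norm{f_n}=1$ and $f_n(x_n)=\norm{x_n}$, one attempts to combine them into a single $g$ for which $\abs{g(x_n)}$ grows without bound while $g(y)$ stays finite, after which the slab $\set{z:\abs{g(z)-g(y)}<1}$ is a weak neighbourhood of $y$ meeting $S$ in only finitely many points; excluding those finitely many by Hausdorffness yields a weak neighbourhood of $y$ disjoint from $S$, contradicting $y\in\overline{S}^{\,w}$. I expect the construction of $g$ to be the genuine obstacle: the hypothesis supplies only a \emph{lower} bound $\norm{x_n}\ge c^n$, so the cross terms $f_n(x_m)$ are a priori controlled only by $\norm{x_m}$, and the weights must be tuned against the summable series $\sum c^{-n}$ so that the diagonal contributions dominate. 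Balancing these terms—possibly requiring finitely many functionals rather than one, and using $c>1$ in an essential way—is the step I would treat most carefully, since it is precisely where the geometric growth is consumed.
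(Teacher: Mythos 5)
Your reductions are correct as far as they go: the uniform boundedness argument for weak \emph{sequential} closedness, the observation that this does not settle weak closedness in infinite dimensions, the splitting $\overline{S}^{\,w}=\{x_1,\dots,x_{N-1}\}\cup\overline{S_N}^{\,w}$, and the reformulation that one must produce finitely many functionals separating $y$ from an entire tail. But the proposal then stops exactly where the lemma begins: constructing those functionals \emph{is} the content of the statement, and you explicitly leave it as "the genuine obstacle" to "treat most carefully" rather than carrying it out. Worse, the one concrete scheme you float cannot be completed as described. If $g=\sum_n a_n f_n$ with $\|f_n\|=1$, $f_n(x_n)=\|x_n\|$ and $\sum_n a_n<\infty$ (needed for convergence), then since the only a priori control on cross terms is $|f_n(x_m)|\le\|x_m\|$, the guaranteed lower bound is $|g(x_m)|\ \ge\ \|x_m\|\bigl(a_m-\sum_{n\ne m}a_n\bigr)$, and the diagonal-dominance condition $a_m>\sum_{n\ne m}a_n$ can hold for at most one index $m$, no matter how the weights are tuned against $\sum_n c^{-n}$. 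So the triangle inequality alone, applied to a fixed weighted sum of norming functionals, provably cannot yield $|g(x_n)|\to\infty$ in the worst case; the geometric growth is not "consumed" at that step in the way you anticipate.

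Note also that the paper offers no proof of this lemma to compare against: it is imported verbatim as Lemma 4.2 of Sanders \cite{sand}, and it is precisely there that the missing work is done --- the separating functionals are built adaptively (using the assumed weak-cluster property to choose indices and functionals inductively, with the hypothesis $c>1$ entering through the resulting summability estimates), not by a one-shot series of norming functionals. A sanity check showing why some such finer mechanism is unavoidable: in a Hilbert space the set $\{\sqrt{n}\,e_n\}$ has norms tending to infinity, yet $0$ lies in its weak closure (any finite family of functionals fails on a set of indices $n$ with $\sum 1/n$ divergent by pigeonhole), whereas $\{c^n e_n\}$ is excluded by the single functional with coefficients $c^{-n}$; the dividing line is quantitative, and an argument that never gets past worst-case cross-term bounds cannot detect it. As it stands, then, your submission is a correct strategic outline with the decisive construction missing, not a proof of the lemma.
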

In the next proposition we find a sequence of positive numbers corresponding to any $\lambda$-hyponormal operators on an infinite dimensional Hilbert space that play a major role in our main results. 
\begin{prop}\label{p2.1n}
Let $T\in \mathcal{B}(\mathcal{H})$ be a $\lambda$-hyponormal operator for some $\lambda>0$. Then there exists a sequence of positive numbers $\{\lambda_n\}_{n\in\mathbb{N}_0}$ such that for any $h\in \mathcal{H}$, with $h\neq 0$, we have 

$$\|T^nh\|\geq \|h\|\lambda_n\left(\frac{\|Th\|}{\|h\|}\right)^n,$$

for each $n\geq 0$.
\end{prop}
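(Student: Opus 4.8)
The plan is to translate $\lambda$-hyponormality into a single pointwise norm inequality and then extract a near-log-convexity of the sequence $a_n:=\|T^nh\|$. Since $T^*T\ge\frac1\lambda TT^*$ means $\langle T^*Tx,x\rangle\ge\frac1\lambda\langle TT^*x,x\rangle$ for every $x$, it is equivalent to $\|Tx\|^2\ge\frac1\lambda\|T^*x\|^2$, that is, $\|T^*x\|\le\sqrt{\lambda}\,\|Tx\|$ for all $x\in\mathcal H$. This is the only point at which the hypothesis is used: it replaces the classical hyponormal estimate $\|T^*x\|\le\|Tx\|$ by one carrying a factor $\sqrt{\lambda}$.

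Next I would establish the key estimate $a_{n+1}^2\le\sqrt{\lambda}\,a_n a_{n+2}$ for all $n\ge0$. Writing $a_{n+1}^2=\langle T^{n+1}h,T^{n+1}h\rangle=\langle T^*T^{n+1}h,\,T^nh\rangle$ and applying Cauchy--Schwarz gives $a_{n+1}^2\le\|T^*(T^{n+1}h)\|\,\|T^nh\|$. Applying the reformulated inequality to the vector $x=T^{n+1}h$ yields $\|T^*(T^{n+1}h)\|\le\sqrt{\lambda}\,\|T^{n+2}h\|=\sqrt{\lambda}\,a_{n+2}$, which is exactly the claimed inequality. For $\lambda=1$ this reduces to the familiar log-convexity of $n\mapsto\|T^nh\|$ for hyponormal operators, so the argument specializes correctly.

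Before iterating I would dispose of the degenerate case. If $Th=0$ the right-hand side vanishes for every $n\ge1$ (and both sides equal $\|h\|$ when $n=0$), so the inequality holds trivially. Otherwise I claim every $a_n$ is strictly positive: if $a_m=0$ for some smallest $m\ge2$, then $a_{m-1}^2\le\sqrt{\lambda}\,a_{m-2}a_m=0$ forces $a_{m-1}=0$, and induction pushes this down to $a_1=\|Th\|=0$, a contradiction. Hence the ratios $r_n:=a_{n+1}/a_n$ are well defined and positive, and the key estimate reads $r_{n+1}\ge\lambda^{-1/2}r_n$, whence $r_n\ge\lambda^{-n/2}r_0$ with $r_0=\|Th\|/\|h\|$.

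Telescoping then gives $\dfrac{a_n}{a_0}=\prod_{k=0}^{n-1}r_k\ge r_0^{\,n}\,\lambda^{-\frac12\sum_{k=0}^{n-1}k}=r_0^{\,n}\,\lambda^{-n(n-1)/4}$, which is the assertion with $\lambda_n:=\lambda^{-n(n-1)/4}$ (so that $\lambda_0=\lambda_1=1$, matching the two trivial cases above). The main thing to get right is the bookkeeping of the per-step loss factor $\sqrt{\lambda}$, which accumulates into the quadratic exponent $n(n-1)/4$; this, together with the non-vanishing argument needed to legitimize the telescoping of ratios, is the only genuine obstacle, the remainder being a routine application of Cauchy--Schwarz.
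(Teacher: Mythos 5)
Your proof is correct and follows essentially the same route as the paper: your Cauchy--Schwarz estimate $a_{n+1}^2\le\sqrt{\lambda}\,a_n a_{n+2}$ is precisely the inequality the paper extracts via the sup-characterization $\|T^{*}x\|=\sup_{\|k\|\le 1}|\langle x,Tk\rangle|$ applied at each iterate, and your closed form $\lambda_n=\lambda^{-n(n-1)/4}$ agrees exactly with the paper's recursion $\lambda_0=\lambda_1=1$, $\lambda_{n+1}=\lambda_n(1/\sqrt{\lambda})^{n}$. If anything your write-up is tighter than the original: you make the induction explicit through the ratios $r_n=a_{n+1}/a_n$ and you justify the non-vanishing of the $a_n$ (and dispose of the case $Th=0$), points the paper glosses over when it divides by $\|T^{n}h\|$ while ``iterating this method.''
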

\begin{proof}
Since $T$ is $\lambda$-hyponormal, then by definition we get that for every $h\in \mathcal{H}$,
$\|Th\|\geq \frac{1}{\sqrt{\lambda}}\|T^*h\|$.
Also, as is known, for every non-zero $h\in \mathcal{H}$, we have 
$$\|h\|=\sup_{k\in\mathcal{H},\|k\|\leq 1}|\langle h,k\rangle|.$$
Hence 
\begin{align*}
\|T^2h\|&=\|T(T(h))\|\\
&\geq \frac{1}{\sqrt{\lambda}}\|T^*(Th)\|\\
&=\frac{1}{\sqrt{\lambda}}\sup_{k\in\mathcal{H},\|k\|\leq 1}|\langle T^*(Th),k\rangle|\\
&=\frac{1}{\sqrt{\lambda}}\sup_{k\in\mathcal{H},\|k\|\leq 1}|\langle Th,Tk\rangle|\\
&\geq\frac{1}{\sqrt{\lambda}}|\langle Th,T(\frac{h}{\|h\|})\rangle|\\
&=\frac{1}{\sqrt{\lambda}}\|h\|\left(\frac{\|Th\|}{\|h\|}\right)^2.
\end{align*}
By iterating this method for $n=3,4,5$ we have 
\begin{itemize}
  \item $\|T^3h\|\geq \left(\frac{1}{\sqrt{\lambda}}\right)^3\|h\|\left(\frac{\|Th\|}{\|h\|}\right)^3$
  \item $\|T^4h\|\geq \left(\frac{1}{\sqrt{\lambda}}\right)^6\|h\|\left(\frac{\|Th\|}{\|h\|}\right)^4$
  \item $\|T^5h\|\geq \left(\frac{1}{\sqrt{\lambda}}\right)^{10}\|h\|\left(\frac{\|Th\|}{\|h\|}\right)^{5}$.
\end{itemize}
By these observation and by induction we get that 
$$\|T^nh\|\geq \|h\|\lambda_n\left(\frac{\|Th\|}{\|h\|}\right)^n,$$
in which $\lambda_0=\lambda_1=1$ and $\lambda_{n+1}=\lambda_n \left(\frac{1}{\sqrt{\lambda}}\right)^n$, for $n\geq 0$.
\end{proof}
In the next proposition we will find vectors with weakly closed orbits for a class of $\lambda$-hyponormal operators.
\begin{prop}\label{p2.3n}
Let $T\in \mathcal{B}(\mathcal{H})$ be a $\lambda$-hyponormal operator for some $0<\lambda\leq 1$. If $h\in \mathcal{H}$ such that $\|Th\|>\|h\|$, then $Orb(T,h)$ is weakly closed in $\mathcal{H}$.
\end{prop}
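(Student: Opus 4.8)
The plan is to apply Lemma \ref{l2.1n} to the orbit, after extracting from Proposition \ref{p2.1n} a purely geometric lower bound on $\|T^nh\|$. First I would note that $\|Th\|>\|h\|$ forces $h\neq 0$, so Proposition \ref{p2.1n} is applicable; set $r:=\|Th\|/\|h\|>1$. The key preliminary observation is that the constants $\lambda_n$ produced in Proposition \ref{p2.1n} satisfy $\lambda_n\geq 1$ for every $n$: since $0<\lambda\leq 1$ we have $\tfrac{1}{\sqrt{\lambda}}\geq 1$, and then the recursion $\lambda_{n+1}=\lambda_n\bigl(\tfrac{1}{\sqrt{\lambda}}\bigr)^n$ together with $\lambda_0=\lambda_1=1$ gives $\lambda_n\geq 1$ by an immediate induction. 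Substituting into the inequality of Proposition \ref{p2.1n} yields
$$\|T^nh\|\geq \|h\|\,\lambda_n\,r^n\geq \|h\|\,r^n\qquad(n\geq 0).$$

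Next I would convert this into the exact hypothesis of Lemma \ref{l2.1n}. Fix any $c$ with $1<c<r$. Because $r/c>1$ and $\|h\|>0$, the quantity $\|h\|(r/c)^n$ tends to infinity, so there is an integer $N\geq 1$ such that $\|h\|(r/c)^n\geq 1$, equivalently $\|T^nh\|\geq \|h\|r^n\geq c^n$, for all $n\geq N$. Re-indexing by $y_k:=T^{N-1+k}h$ for $k\geq 1$, one gets $\|y_k\|\geq c^{N-1+k}\geq c^k$ (as $c^{N-1}\geq 1$), so Lemma \ref{l2.1n} applies to $\{y_k:k\geq 1\}=\{T^nh:n\geq N\}$ and shows this tail of the orbit is weakly closed. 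Finally, since
$$Orb(T,h)=\{T^nh:0\leq n<N\}\cup\{T^nh:n\geq N\},$$
and the first set is finite, hence weakly closed (the weak topology is Hausdorff), the orbit is a union of two weakly closed sets and is therefore weakly closed.

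I expect the main obstacle to be the regime $\|Th\|\leq 1$ (so in particular $\|h\|<1$), where the bound $\|T^nh\|\geq\|h\|r^n$ does not dominate $c^n$ for the first several values of $n$; this is precisely why the whole orbit cannot be fed into Lemma \ref{l2.1n} at once, and why one must discard a finite initial segment and re-index. The two remaining ingredients, namely $\lambda_n\geq 1$ and the weak closedness of finite sets, are routine, so the proof reduces to making the choice of $c$ and $N$ explicit and verifying the shifted growth estimate.
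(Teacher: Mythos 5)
Your proof is correct, and it reaches the conclusion by a mildly different route than the paper. Both arguments rest on the same two ingredients---the growth estimate of Proposition \ref{p2.1n} (together with the observation, which you prove explicitly, that $0<\lambda\le 1$ forces $\lambda_n\ge 1$) and Lemma \ref{l2.1n}---but you handle the nuisance constant $\|h\|$ by truncation, whereas the paper handles it by normalization: it applies Proposition \ref{p2.1n} to the unit vector $h/\|h\|$, for which the bound reads $\|T^n(h/\|h\|)\|\ge \lambda_n r^n\ge r^n$ with $r=\|Th\|/\|h\|>1$, so Lemma \ref{l2.1n} applies with $c=r$ to the entire orbit of $h/\|h\|$ at once, with no initial segment to discard. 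Your decomposition $Orb(T,h)=\{T^nh:0\le n<N\}\cup\{T^nh:n\ge N\}$, with the finite part weakly closed because the weak topology is Hausdorff, is a perfectly sound substitute, and it has two small advantages over the paper's write-up: it proves weak closedness of $Orb(T,h)$ itself, whereas the paper's proof as written concludes only that $Orb(T,h/\|h\|)$ is weakly closed and leaves unstated the (easy) final step that multiplication by the nonzero scalar $\|h\|$ is a weak homeomorphism carrying that orbit onto $Orb(T,h)$; and your verification that $\lambda_n\ge 1$ repairs the paper's somewhat loose device of ``setting $c^n=\lambda_n r^n$,'' which does not define a single constant $c$ of the kind Lemma \ref{l2.1n} requires. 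The price of your route is the extra bookkeeping with $c\in(1,r)$, the threshold $N$, and the re-indexing $y_k=T^{N-1+k}h$, all of which you carry out correctly.
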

\begin{proof}
Let $0\neq h\in \mathcal{H}$ and $\|Th\|>\|h\|$. Then we set 

$$c^n=\lambda_n\left(\frac{\|Th\|}{\|h\|}\right)^n=\left(\frac{1}{\sqrt{\lambda}}\right)^{t_n}\left(\frac{\|Th\|}{\|h\|}\right)^n,$$

in which $\{\lambda_n\}_n$ is the sequence that is defined in the proof of the Proposition \ref{p2.1n} and $\{t_n\}_n$ is an increasing sequence of positive integers. Since $T$ is $\lambda$-hyponormal, then by the Proposition \ref{p2.1n} we have 
$$\|T^n(\frac{h}{\|h\|})\|\geq c^n.$$
Therefore by Lemma \ref{l2.1n} we get that $Orb(T,\frac{h}{\|h\|})$ is weakly closed in $\mathcal{H}$.  
\end{proof}
Now we find a class of $\lambda$-hyponormal operators that are not weakly hypercyclic.
\begin{thm}\label{t2.4n}
Let $T\in \mathcal{B}(\mathcal{H})$ be a $\lambda$-hyponormal operator for some $0<\lambda\leq 1$. Then $T$ is not weakly hypercyclic.
\end{thm}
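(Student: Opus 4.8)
The plan is to argue by contradiction. Suppose $T$ is weakly hypercyclic and let $h$ be a weakly hypercyclic vector, so that $Orb(T,h)=\{T^nh:n\geq 0\}$ is dense in $\mathcal{H}$ for the weak topology. The whole strategy is to locate a vector inside this orbit to which Proposition \ref{p2.3n} applies, and then to derive a contradiction from the fact that a set which is simultaneously weakly closed and weakly dense must fill the entire space, whereas an orbit is only countable.

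First I would show that the orbit cannot be norm-bounded. Indeed, the norm of $\mathcal{H}$ is weakly lower semicontinuous, so the weak closure of any subset contained in a ball of radius $M$ is again contained in that ball. Hence if $Orb(T,h)$ were bounded, its weak closure would be a bounded subset of $\mathcal{H}$ and so could not equal the norm-unbounded space $\mathcal{H}$, contradicting weak density. Therefore $\sup_n\|T^nh\|=\infty$, and since a non-increasing sequence of nonnegative reals is bounded, there must exist an index $n$ with $\|T^{n+1}h\|>\|T^nh\|$. Put $g:=T^nh$; then $\|Tg\|>\|g\|$, and in particular $g\neq 0$.

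Next I would verify that $g$ is again a weakly hypercyclic vector. The key point is that $Orb(T,g)=\{T^mh:m\geq n\}$ differs from $Orb(T,h)$ only by the finite set $F=\{h,Th,\dots,T^{n-1}h\}$. Since $F$ is weakly closed, the weak closure of $Orb(T,h)$ equals $F$ together with the weak closure of $Orb(T,g)$; as the former is all of $\mathcal{H}$ and a finite set has empty weak interior in an infinite-dimensional space, the weak closure of $Orb(T,g)$ must again be $\mathcal{H}$. Thus $g$ satisfies $\|Tg\|>\|g\|$ and has weakly dense orbit.

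Finally I would invoke Proposition \ref{p2.3n}: since $0<\lambda\leq 1$ and $\|Tg\|>\|g\|$, the orbit $Orb(T,g)$ is weakly closed. Being at once weakly closed and weakly dense forces $Orb(T,g)=\mathcal{H}$. But $Orb(T,g)$ is countable, whereas an infinite-dimensional Hilbert space is uncountable --- a contradiction. Hence no weakly hypercyclic vector can exist, and $T$ is not weakly hypercyclic. I expect the main obstacle to be the two topological reductions in the middle: confirming that weak density forces the orbit to be unbounded (via lower semicontinuity of the norm) and that discarding the finite initial segment of the orbit preserves weak density. Once a vector $g$ with $\|Tg\|>\|g\|$ and weakly dense orbit is in hand, Proposition \ref{p2.3n} closes the argument immediately.
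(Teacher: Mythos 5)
Your proposal is correct and follows essentially the same route as the paper's proof: rule out a norm-bounded orbit to find $N$ with $\|T^{N+1}h\|>\|T^Nh\|$, note that $T^Nh$ is still a weakly hypercyclic vector, and apply Proposition \ref{p2.3n} to get a weakly closed yet weakly dense orbit, which is absurd. You merely make explicit two steps the paper leaves implicit --- that dropping a finite initial segment preserves weak density, and that the final contradiction comes from a countable orbit having to equal all of $\mathcal{H}$ --- both of which are sound.
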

\begin{proof}
Suppose on contrary. Let $h\in \mathcal{H}$ be a weakly hypercyclic vector for $T$. We claim that there is some $N\geq 1$ such that $\|T^{N+1}h\|>\|T^Nh\|$, because if $\|T^{n+1}h\|\leq \|T^nh\|$, then $Orb(T,h)$ is norm bounded, but this is impossible, since weakly dense sets cannot be norm bounded. Therefore $\|T^{N+1}h\|>\|T^Nh\|$ for some $N\geq 1$. As $h$ is weakly hypercyclic vector, $T^Nh$ is also a weakly hypercyclic vector. Hence we have a weakly hypercyclic vector $k=T^Nh$ with the property $\|Tk\|>\|k\|$. However, by Proposition \ref{p2.3n} we get that $Orb(T,k)$ is weakly closed. This is a contradiction, because $k$ is weakly hypercyclic vector.
\end{proof}

Here we recall the theorem due to Douglas \cite{Douglas1966} that will be used in the sequel.

\begin{thm}[Douglas's Theorem]\label{t1.1}
    Let \( A, B \in \mathcal{B}(\mathcal{H}) \). Then the following conditions are equivalent:
    \begin{enumerate}
        \item \( \mathcal{R}(B) \subseteq \mathcal{R}(A) \).
        \item There exists \( \lambda > 0 \) such that \( BB^* \leq \lambda A A^* \).
        \item There exists \(C \in \mathcal{B}(\mathcal{H}) \) such that \( AC = B \).
    \end{enumerate}
\end{thm}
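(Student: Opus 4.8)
The plan is to establish the three conditions as equivalent by proving the chain of implications (3) $\Rightarrow$ (1), (3) $\Rightarrow$ (2), (2) $\Rightarrow$ (3) and (1) $\Rightarrow$ (3); since each of (1) and (2) is shown both to imply and to follow from (3), this yields the full equivalence. The two implications out of (3) are immediate: if $AC = B$ then $\mathcal{R}(B) = \mathcal{R}(AC) \subseteq \mathcal{R}(A)$, which gives (1); and $BB^* = A(CC^*)A^* \leq \|C\|^2 AA^*$, since $CC^* \leq \|C\|^2 I$, which gives (2) with $\lambda = \|C\|^2$. The substance of the theorem lies in the two reverse implications.

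For (2) $\Rightarrow$ (3), I would first rewrite the operator inequality $BB^* \leq \lambda AA^*$ in its quadratic-form version: testing against an arbitrary $x \in \mathcal{H}$ gives $\|B^* x\|^2 \leq \lambda \|A^* x\|^2$, hence $\|B^* x\| \leq \sqrt{\lambda}\,\|A^* x\|$ for all $x$. This pointwise bound is exactly what lets me define a map $D$ on $\mathcal{R}(A^*)$ by $D(A^* x) = B^* x$. Well-definedness follows because $A^* x = A^* y$ forces $\|B^*(x-y)\| \leq \sqrt{\lambda}\,\|A^*(x-y)\| = 0$; the same inequality shows $\|D(A^* x)\| \leq \sqrt{\lambda}\,\|A^* x\|$, so $D$ is bounded by $\sqrt{\lambda}$ on $\mathcal{R}(A^*)$. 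I would then extend $D$ by continuity to $\overline{\mathcal{R}(A^*)}$ and set $D = 0$ on the orthogonal complement $(\overline{\mathcal{R}(A^*)})^\perp = \ker A$, obtaining $D \in \mathcal{B}(\mathcal{H})$ with $DA^* = B^*$ on all of $\mathcal{H}$. Taking adjoints gives $AD^* = B$, so $C = D^*$ is the required factor.

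For (1) $\Rightarrow$ (3), the inclusion $\mathcal{R}(B) \subseteq \mathcal{R}(A)$ lets me solve $Ay = Bx$ for each $x$; choosing the unique solution $y \in (\ker A)^\perp = \overline{\mathcal{R}(A^*)}$ defines a linear map $Cx = y$ with $ACx = Bx$. To see that $C$ is bounded I would invoke the closed graph theorem: if $x_n \to x$ and $Cx_n \to z$, then $ACx_n = Bx_n \to Bx$ by continuity of $B$, while $ACx_n \to Az$ by continuity of $A$, so $Az = Bx$; since each $Cx_n$ lies in the closed subspace $(\ker A)^\perp$, so does $z$, and uniqueness of the minimal-norm solution forces $z = Cx$. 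Thus the graph is closed and $C \in \mathcal{B}(\mathcal{H})$.

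The main obstacle is the construction in (2) $\Rightarrow$ (3): the delicate points are verifying that $D$ is well defined on $\mathcal{R}(A^*)$, which rests entirely on the norm inequality extracted from the operator inequality, and correctly arranging the domain, namely extending to the closure of $\mathcal{R}(A^*)$ and killing its complement, so that the resulting operator is globally bounded and satisfies $DA^* = B^*$ before passing to adjoints. The closed graph argument in (1) $\Rightarrow$ (3) is the other place where care is needed, since boundedness of $C$ is not automatic from its pointwise definition.
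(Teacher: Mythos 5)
Your proof is correct and follows precisely the classical argument of Douglas's original paper \cite{Douglas1966}, which is exactly the source the paper cites here (the paper states the theorem without giving its own proof). Both the construction of $D$ with $DA^*=B^*$ from the norm inequality $\|B^*x\|\leq\sqrt{\lambda}\,\|A^*x\|$ in (2) $\Rightarrow$ (3), and the closed graph argument for the minimal-norm solution operator in (1) $\Rightarrow$ (3), are the standard steps of that proof.
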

\begin{thm} Let $T\in \mathcal{B}(\mathcal{H})$ and $T=T^*C$, for some contraction $C\in \mathcal{B}(\mathcal{H})$ ($\|C\|\leq 1$). Then $T$ can not be weakly hypercyclic.
\end{thm}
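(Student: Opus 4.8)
The plan is to reduce the statement to the already-established Theorem \ref{t2.4n} by showing that the hypothesis $T=T^*C$ with $\|C\|\le 1$ forces $T$ to be hyponormal, i.e. $\lambda$-hyponormal with $\lambda=1\in(0,1]$. Once that is in hand, the conclusion is immediate.

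First I would take adjoints in the relation $T=T^*C$ to obtain $T^*=C^*T$. Substituting both relations into the product $TT^*$ gives
$$TT^*=(T^*C)(C^*T)=T^*(CC^*)T.$$
The key observation is that $\|C\|\le 1$ yields the operator inequality $CC^*\le I$, since $CC^*\le\|CC^*\|I=\|C\|^2 I\le I$.

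Next I would feed this bound into the sandwiched expression: testing against an arbitrary $h\in\mathcal{H}$,
$$\langle TT^*h,h\rangle=\langle CC^*(Th),Th\rangle\le\langle Th,Th\rangle=\langle T^*Th,h\rangle,$$
so that $TT^*\le T^*T$. Hence $T$ is hyponormal, which is exactly $\lambda$-hyponormality for $\lambda=1$. Since $1\in(0,1]$, Theorem \ref{t2.4n} applies directly and gives that $T$ cannot be weakly hypercyclic, completing the argument.

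I do not anticipate a genuine obstacle here: the entire content is the algebraic identity $TT^*=T^*(CC^*)T$ together with the elementary bound $CC^*\le I$. The only point requiring a little care is the passage from $CC^*\le I$ to $T^*(CC^*)T\le T^*T$, which is the standard fact that the conjugation map $X\mapsto T^*XT$ preserves the positive-semidefinite order; I would justify it by the displayed inner-product computation rather than invoking it abstractly. One could alternatively route the first step through Douglas's Theorem \ref{t1.1} (taking $A=T^*$ and $B=T$), but since the contraction $C$ is given explicitly, the direct computation is cleaner and yields the sharp constant $\lambda=1$ with less overhead.
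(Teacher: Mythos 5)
Your proof is correct and follows essentially the same route as the paper: establish that $T$ is $\lambda$-hyponormal with $\lambda\in(0,1]$ and invoke Theorem \ref{t2.4n}. The only difference is cosmetic --- the paper cites Douglas's Theorem \ref{t1.1} to get $TT^*\leq \|C\|^2\,T^*T$, while you prove the operator inequality directly via $TT^*=T^*(CC^*)T\leq T^*T$, which is precisely the easy implication of Douglas's theorem (and incidentally sidesteps the degenerate case $C=0$, where the paper's constant $\lambda=\|C\|^2$ would vanish).
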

\begin{proof}
Since $T=T^*C$ and $C$ is contraction, then by Douglas's Theorem, we have $\|C\|^2T^*T\geq TT^*$ and so $T$ is $\lambda$-hyponormal for $\lambda=\|C\|^2\leq 1$. And so by Theorem \ref{t2.4n} we get that $T$ can not be weakly hypercyclic.
\end{proof}
\section{\sc\bf Weighted composition operators}
Let $(X, \mathcal{F}, \mu)$ be a measure space, $\varphi:X\rightarrow X$ be a non-singular measurable transformation (we mean $\mu\circ\varphi^{-1}\ll\mu$), $h=\frac{d\mu\circ\varphi^{-1}}{d\mu}$ (the Radon-Nikodym derivative of the measure $\mu\circ\varphi^{-1}$ with respect to the measure $\mu$), $Ef=E(f|\varphi^{-1}(\mathcal{F}))$ (the conditional expectation of $f$ with respect to the sigma subalgebra $\varphi^{-1}(\mathcal{F})$ of $\mathcal{F}$) and $C_{\varphi}f=f\circ\varphi$, for every $f\in L^2(\mu)$, be the related composition operator on $L^2(\mu)$.
We recall that for every $f\in L^2(\mu)$,
\begin{itemize}
 \item $C^*_{\varphi}f=hE(f)\circ\varphi^{-1}$
 \item $C^*_{\varphi}C_{\varphi}f=h.f$,
 \item $C_{\varphi}C^*_{\varphi}f=(h\circ\varphi)Ef$
 \item $|C_{\varphi}|f=\sqrt{h}.f$.
\end{itemize}
And if $W=M_uC_{\varphi}$ the weighted composition operator on $L^2(\mu)$, in which $u:X\rightarrow \mathbb{C}$ is a positive measurable function and $P$ is the orthogonal projection of $L^2(\mu)$ onto $\overline{\mathcal{R}(W)}$, then we have
\begin{itemize}
 \item $W^*f=hE(u.f)\circ\varphi^{-1}$,
 \item $W^*Wf=hE(u^2)\circ\varphi^{-1}.f$,
 \item $WW^*f=J\circ\varphi P(f)=u.(h\circ\varphi)E(uf)$,
 \item $|W|f=\sqrt{hE(u^2)\circ\varphi^{-1}}.f$.
\end{itemize}

For $n\in \mathbb{N}$, let $h_n=\frac{d\mu\circ\varphi^{-n}}{\mu}$, $u_n=\prod^{n-1}_{k=0}u\circ\varphi^k$ and 
$$J_n=h_nE_n(|u_n|^2)\circ\varphi^{-n},$$
in which $E_n$ is the conditional expectation operator with respect to the sigma subalgebra $\varphi^{-n}(\mathcal{F})$ of $\mathcal{F}$. In particular we denote $J=J_1$ and $h=h_1$. Since $W^n(f)=u_nf\circ\varphi^n$, for every $n\in\mathbb{N}$ and $f\in L^2(\mu)$, then $W^n$, for $n\geq 2$, is also a weighted composition operator and we have 
\begin{align*}
\int_XJ_n|f|^2d\mu&=\|W^nf\|^2\\
&=\int_X|u_n|^2|f|^2\circ\varphi^nd\mu\\
&\int_Xh_{n-1}E_{n-1}(|u_n|^2|f|^2\circ\varphi^n)\circ\varphi^{-(n-1)}d\mu\\
&\int_Xh_{n-1}E_{n-1}(|u_{n-1}|^2)\circ\varphi^{-(n-1)}|u|^2|f|^2\circ\varphi d\mu\\
&\int_XJ_{n-1}|u|^2|f|^2\circ\varphi d\mu\\
&\int_XhE(J_{n-1}|u|^2)\circ\varphi^{-1}|f|^2d\mu.
\end{align*}
 This implies that
  $$J_n=hE(J_{n-1}|u|^2)\circ\varphi^{-1},$$
 and so for $n\geq 2$ if $u\equiv 1$, then 
 $$h_n=hE(h_{n-1}|u|^2)\circ\varphi^{-1}.$$ For more information about composition and weighted composition operators on $L^2(\mu)$, one can see \cite{bbl, cj}.\\
 Here we rewrite and compute the null space of the weighted composition operators ($W$) and the relation between the null space of $W$ and $W^*$. \cite{bbl, lam}
 \begin{lem}\label{l1.1}
For bounded weighted composition operator $W=M_uC_{\varphi}$ on $L^2(\mu)$ and $J=hE(|u|^2)\circ\varphi^{-1}$, the followings hold:
\begin{itemize}
\item (a) $$\overline{\mathcal{R}(W^*)}=\overline{\mathcal{R}(W^*W)}=\{f\in L^2(\mu): S(f)\subseteq S(J)\}=L^2(S(J)).$$
\item (b) $Ker(W)\subseteq Ker(W^*)$ if and only if $S(u)\subseteq S(J)$. 
\end{itemize}
 \end{lem}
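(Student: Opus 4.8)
The plan is to prove the two assertions in Lemma \ref{l1.1} by exploiting the factorizations $W^*f = hE(u.f)\circ\va^{-1}$ and $W^*Wf = J\cdot f$ recalled just before the statement, together with the basic structural fact that the conditional expectation $E$ and the measure-theoretic support sets control the ranges and kernels of these operators. Throughout, $S(g)$ denotes the support $\{x : g(x)\neq 0\}$ of a measurable function $g$.

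For part (a), the first step is to observe that $W^*Wf = J\cdot f = M_J f$ is simply multiplication by the nonnegative function $J$. Hence $\overline{\R(W^*W)}$ is the closure of the range of a multiplication operator, and a standard computation shows that the closure of $\R(M_J)$ is exactly $L^2(S(J))=\{f : S(f)\subseteq S(J)\}$: indeed $M_J f$ vanishes off $S(J)$, so $\R(M_J)\subseteq L^2(S(J))$, while conversely any $g\in L^2(S(J))$ is approximated in norm by $J\cdot(g/J)\chi_{\{J\geq 1/m\}}$ as $m\to\infty$. This gives the second and third equalities of (a). For the first equality, $\overline{\R(W^*)}=\overline{\R(W^*W)}$, I would invoke the general Hilbert-space identity $\overline{\R(W^*)}=\overline{\R(W^*W)}$, which holds for any bounded operator since $\mathrm{Ker}(W)=\mathrm{Ker}(W^*W)$ and $\overline{\R(A^*)}=\mathrm{Ker}(A)^{\perp}$ for $A=W$ and $A=W^*W$; alternatively one may cite Douglas's Theorem (Theorem \ref{t1.1}) applied to $B=W^*$ and $A=W^*W$ (noting $W^*=W^*W\cdot C$ for a suitable $C$). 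Either route closes part (a).

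For part (b), the strategy is to make both kernels explicit. Since $W^*Wf = J\cdot f$, we have $\mathrm{Ker}(W)=\mathrm{Ker}(W^*W)=\{f : J\cdot f = 0\}=L^2(S(J)^c)$, i.e.\ the functions supported off $S(J)$. To identify $\mathrm{Ker}(W^*)$, I note that $WW^*f = u\cdot(h\circ\va)E(uf)$, so $\mathrm{Ker}(W^*)=\mathrm{Ker}(WW^*)=\{f : u\cdot(h\circ\va)E(uf)=0\}$; unwinding this through the support of $WW^*$ (whose support is governed by $S(u)$ together with the conditional-expectation structure) shows $\mathrm{Ker}(W^*)=L^2(S(u)^c)$ up to the relevant $E$-measurable identifications. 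The inclusion $\mathrm{Ker}(W)\subseteq\mathrm{Ker}(W^*)$ then becomes $L^2(S(J)^c)\subseteq L^2(S(u)^c)$, which by comparing supports is equivalent to $S(u)^c\subseteq S(J)^c$, i.e.\ to $S(u)\subseteq S(J)$, as claimed.

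The main obstacle I anticipate is the careful bookkeeping of supports under the conditional expectation in part (b). The set $S(J)=S\big(hE(|u|^2)\circ\va^{-1}\big)$ is only defined up to $\mu$-null sets, and relating it to $S(u)$ requires knowing how $E(|u|^2)$ and the composition with $\va^{-1}$ redistribute mass; the inclusion $S(u)\subseteq S(J)$ must be interpreted modulo $\mu$-null sets and verified by testing against characteristic functions of the relevant sets. I would handle this by reducing the kernel inclusion to a pointwise a.e.\ statement about these supports and then using the defining property of conditional expectation (that $E(g)=0$ a.e.\ on a set iff $g=0$ a.e.\ on the corresponding $\va^{-1}(\mathcal{F})$-saturation) to convert the operator-level inclusion into the support-level condition. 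The range identity in part (a), by contrast, is routine once $W^*W$ is recognized as the multiplication operator $M_J$.
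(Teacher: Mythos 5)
Your part (a) is correct, and it takes a genuinely cleaner route than the paper's. You obtain $\overline{\mathcal{R}(W^*)}=\overline{\mathcal{R}(W^*W)}$ in one stroke from the general duality $\overline{\mathcal{R}(A^*)}=Ker(A)^{\perp}$ together with $Ker(W)=Ker(W^*W)$, whereas the paper proves the inclusion $\mathcal{R}(W^*)\subseteq L^2(S(J))$ by hand, routing through the projection $P$ onto $\overline{\mathcal{R}(W)}$ and the identity $W^*Pf=W^*f$, and then closes a three-term chain of inclusions. Your approximation of $g\in L^2(S(J))$ by $g\chi_{\{J\geq 1/m\}}=J\cdot(g/J)\chi_{\{J\geq 1/m\}}$ is exactly the paper's device $f_n=f\chi_{A_n}/J$ with $A_n=\{J\geq n^{-1/2}\}$, so the substance of this half coincides; what your version buys is the elimination of the projection argument.

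Part (b), however, has a genuine gap: the identification $Ker(W^*)=L^2(S(u)^c)$ on which your whole reduction rests is false in general, and the hedge ``up to the relevant $E$-measurable identifications'' hides precisely the failure. Indeed $Ker(W^*)=\overline{\mathcal{R}(W)}^{\perp}$, and $\overline{\mathcal{R}(W)}$ is the closure of $\{u\cdot g : g=f\circ\varphi \ \text{is} \ \varphi^{-1}(\mathcal{F})\text{-measurable}\}$, which is in general a \emph{proper} subspace of $L^2(S(u))$, because the second factor cannot be an arbitrary function on $S(u)$. Concretely, take $X=\{1,2\}$ with counting measure, $\varphi\equiv 1$, $u\equiv 1$: then $Wf=(f(1),f(1))$ and $W^*f=(f(1)+f(2),0)$, so $Ker(W^*)$ is the span of $(1,-1)$, while $L^2(S(u)^c)=\{0\}$. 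Since $\overline{\mathcal{R}(W)}$ is smaller than $L^2(S(u))$, its orthocomplement is strictly larger than $L^2(S(u)^c)$, and your equivalence ``$Ker(W)\subseteq Ker(W^*)$ iff $L^2(S(J)^c)\subseteq L^2(S(u)^c)$'' is unproven in both directions; the lemma's statement happens to be true, but your argument does not establish it. (Your computation $Ker(W)=Ker(M_J)=L^2(S(J)^c)$ is fine; it is only the $Ker(W^*)$ side that breaks.)

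The repair is asymmetric, which is why the paper treats the two directions differently. For the backward direction you need only the \emph{inclusion} $\overline{\mathcal{R}(W)}\subseteq L^2(S(u))$ (true, though equality fails): if $S(u)\subseteq S(J)$ then $\overline{\mathcal{R}(W)}\subseteq L^2(S(J))=\overline{\mathcal{R}(W^*)}$ by part (a), and taking orthocomplements gives $Ker(W)\subseteq Ker(W^*)$. The forward direction cannot be read off from any pointwise kernel description; the paper instead takes $A\subseteq X\setminus S(J)$ of finite measure, notes $\|W\chi_A\|^2=\int_A J\,d\mu=0$, so $\chi_A\in Ker(W)\subseteq Ker(W^*)=\overline{\mathcal{R}(W)}^{\perp}$, and then pairs $\chi_A$ against $W\chi_{A_n}$ for a $\sigma$-finite exhaustion $\{A_n\}$ of $X$ to conclude $\int_A u\,d\mu=0$, hence $u=0$ a.e.\ off $S(J)$. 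A variant closest to your setup: from $Ker(W)\subseteq Ker(W^*)$ deduce $\overline{\mathcal{R}(W)}\subseteq\overline{\mathcal{R}(W^*)}=L^2(S(J))$, then test with $f=\chi_{A_n}$ so that $W\chi_{A_n}=u\chi_{\varphi^{-1}(A_n)}$ vanishes a.e.\ off $S(J)$ for every $n$; since $X=\bigcup_n\varphi^{-1}(A_n)$, this yields $S(u)\subseteq S(J)$. Either way, some testing against characteristic functions is unavoidable, and it is exactly the step your proposal skips.
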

\begin{proof}
(a) Let $f\in L^2(\mu)$. Then $P(f)\in \overline{\mathcal{R}(W)}$ and so there is a sequence $\{f_n\}_n\subseteq L^2(\mu)$ such that $Wf_n\rightarrow P(f)$. Since $Ker(W^*)=\overline{\mathcal{R}(W)}^{\perp}=Ker(P)$, then we have $W^*(f-P(f))=0$, i.e., $W^*(Pf)=W^*(f)$. Also, we know that $W^*W=M_J$ and consequently $S(W^*Wf_n)\subseteq S(J)$. Hence 
$$W^*Wf_n\rightarrow W^*P(f)=W^*f$$ 
and so $S(W^*f)\subseteq S(J)$.\\
For the converse, let $f\in L^2(\mu)$ and $S(f)\subseteq S(J)$. Then we easily can find a sequence $\{f_n\}_n\subseteq L^2(\mu)$ such that $W^*Wf_n\rightarrow f$. Indeed, we can define $f_n=\frac{f.\chi_{A_n}}{J}$, in which $A_n=\{x\in X: J(x)\geq n^{-\frac{1}{2}}\}$ and clearly $\chi_{A_n}\rightarrow \chi_X=1$. Therefore we have
 $$\{f\in L^2(\mu): S(f)\subseteq S(J)\}\subseteq \overline{\mathcal{R}(W^*W)}$$
and then 
 $$\overline{\mathcal{R}(W^*)}\subseteq\{f\in L^2(\mu): S(f)\subseteq S(J)\}\subseteq \overline{\mathcal{R}(W^*W)}.$$
 On the other hands we have $\overline{\mathcal{R}(W^*W)}\subseteq \overline{\mathcal{R}(W^*)}$. By these observations we get the proof.\\
(b) Let $S(u)\subseteq S(J)$ and $f\in \overline{\mathcal{R}(W)}$. Then by definition ($Wf=u.f\circ\varphi$), we have $S(f)\subseteq S(u)\subseteq S(J)$. Hence by part (a) we get that $f\in \overline{\mathcal{R}(W^*)}$, this means that $\overline{\mathcal{R}(W)}\subseteq \overline{\mathcal{R}(W^*)}$ and so $$Ker(W)=\overline{\mathcal{R}(W^*)}^{\perp}\subseteq \overline{\mathcal{R}(W)}^{\perp}=Ker(W^*).$$
 Now let $Ker(W)\subseteq Ker(W^*)$ and $A\subseteq X\setminus S(J)$ such that $\mu(A)<\infty$. Then we get that 
 $$0=\int_AJd\mu=\int_X|u.\chi_A\circ\varphi|^2 d\mu=\int_X|W(\chi_A)|^2=\|W(\chi_A)\|^2.$$
 Hence $\chi_A\in Ker(W)\subseteq Ker(W^*)$, where $Ker(W^*)=\overline{\mathcal{R}(W)}^{\perp}$. Since $X$ is sigma-finite then there is a sequence of measurable sets $\{A_n\}$ with finite measure such that $X=\cup^{\infty}_{n=1}A_n$ and so $X=\cup^{\infty}_{n=1}\varphi^{-1}(A_n)$. As $\mu(A_n)<\infty$, we have $\chi_{A_n}\in L^2(\mu)$ and 
 $$0=\langle W(\chi_{A_n}),\chi_A\rangle=\int_X u.\chi_{A_n}\circ\varphi.\chi_Ad\mu=\int_{\varphi^{-1}(A_n)} u.\chi_Ad\mu.$$
 This implies that $\int_A ud\mu=0$ and so for every measurable subset $B\subseteq A\subseteq X\setminus S(J)$ we have $\int_B ud\mu=0$. Therefore $u\equiv 0$ on $A$ and so $S(u)\subseteq S(J)$.
\end{proof}
In the sequel, we prove that the closedness of range of $W$ on the Hilbert space $L^2(\mu)$ is equivalent to the closedness of range of $W^*$.
\begin{prop}\label{p1.2}
Let $W=M_uC_{\varphi}$ be a bounded operator on $L^2(\mu)$ and $J=hE(|u|^2)\circ\varphi^{-1}$. Then $W$ has closed range if and only if $W^*$ has closed range if and only if $J\geq \delta$,a.e., on $S(J)$, for some $\delta>0$.
\end{prop}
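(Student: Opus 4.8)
The plan is to push the entire statement onto the positive multiplication operator $W^*W=M_J$, whose range-closedness is controlled directly by the multiplier $J$. I would use the standard fact (a consequence of the closed range theorem, or of the polar decomposition $T=U|T|$ with $|T|=(T^*T)^{1/2}$) that for any $T\in\mathcal{B}(\mathcal{H})$ the ranges $\mathcal{R}(T)$, $\mathcal{R}(T^*)$ and $\mathcal{R}(T^*T)$ are closed or not closed simultaneously. Applied to $T=W$, this instantly yields the first equivalence ``$W$ has closed range iff $W^*$ has closed range'', and reduces the remaining claim to showing that $M_J=W^*W$ has closed range if and only if $J\geq\delta$ a.e. on $S(J)$ for some $\delta>0$.

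Next I would record the geometry of $M_J$. Since $J\geq 0$, its kernel is $Ker(M_J)=L^2(X\setminus S(J))$, so that $(Ker(M_J))^{\perp}=L^2(S(J))$; by Lemma \ref{l1.1}(a) this is exactly $\overline{\mathcal{R}(W^*W)}=\overline{\mathcal{R}(W^*)}$. Because a bounded operator has closed range precisely when it is bounded below on the orthogonal complement of its kernel, $M_J$ has closed range if and only if there is $\delta>0$ with
\[
\int_{S(J)}J^2\,|f|^2\,d\mu\;\geq\;\delta^2\int_{S(J)}|f|^2\,d\mu
\qquad\text{for all } f\in L^2(S(J)).
\]

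It then remains to match this analytic lower bound with the pointwise bound $J\geq\delta$ a.e. on $S(J)$. The forward implication is immediate: if $J\geq\delta$ on $S(J)$ then $\|M_Jf\|^2=\int J^2|f|^2\geq\delta^2\|f\|^2$. For the converse, suppose the displayed inequality holds with constant $\delta>0$ and set $B=\{x\in S(J):J(x)<\delta\}$; if $\mu(B)>0$, then by $\sigma$-finiteness of $X$ I may select $B_0\subseteq B$ with $0<\mu(B_0)<\infty$, and testing the inequality on $f=\chi_{B_0}\in L^2(S(J))$ gives $\int_{B_0}J^2\,d\mu\geq\delta^2\mu(B_0)$, which contradicts $J<\delta$ throughout $B_0$. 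Hence $\mu(B)=0$, i.e. $J\geq\delta$ a.e. on $S(J)$, completing the chain of equivalences. The routine ingredients—the identity $W^*W=M_J$ and the closed-range transfer among $W$, $W^*$, $W^*W$—are standard; the only delicate step is this converse, where $\sigma$-finiteness is needed to produce a finite-measure test set on which $J$ stays below $\delta$, thereby upgrading the operator inequality to the almost-everywhere bound.
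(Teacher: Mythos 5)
Your proof is correct, and it takes a genuinely different route from the paper's on two counts. The paper handles the two operators separately and concretely: it reduces the closed-rangeness of $W$ to that of $M_{J^{1/2}}$ through the explicit norm identity $\|Wf\|^2=\int_X J|f|^2\,d\mu=\|M_{J^{1/2}}f\|^2$ (which in effect restates the identity $|W|=M_{\sqrt{J}}$ from the preliminaries), reduces the closed-rangeness of $W^*$ to that of $M_J=W^*W$ via Lemma \ref{l1.1}(a), and then cites Takagi--Yokouchi \cite{tak} for the criterion that $M_J$ has closed range if and only if $J\geq\delta$ a.e.\ on $S(J)$. You instead invoke the abstract fact that $\mathcal{R}(T)$, $\mathcal{R}(T^*)$ and $\mathcal{R}(T^*T)$ are closed simultaneously, which collapses both reductions into a single step and dispenses with both the norm computation and Lemma \ref{l1.1}(a) --- indeed your appeal to that lemma is decorative, since positivity of $M_J$ already gives $(\ker M_J)^{\perp}=L^2(S(J))=\overline{\mathcal{R}(M_J)}$; and where the paper relies on the citation to \cite{tak}, you prove the multiplication-operator criterion from scratch via the bounded-below characterization of closed range together with a finite-measure test set $B_0\subseteq\{x\in S(J): J(x)<\delta\}$. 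Both arguments are sound; yours is more self-contained and makes explicit the one delicate point (the use of $\sigma$-finiteness to produce the test set, which upgrades the operator lower bound to the pointwise bound), while the paper's computation is more elementary in its ingredients and keeps visible the concrete modulus $M_{J^{1/2}}$ that recurs in the weighted-composition framework.
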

\begin{proof}
For $f\in L^2(\mu)$ we have 
\begin{align*}
\|Wf\|^2&=\int_X|uf\circ\varphi|^2d\mu\\
&=\int_X|u|^2|f|^2\circ\varphi d\mu\\
&=\int_XhE(|u|^2)\circ\varphi^{-1}|f|^2d\mu\\
&=\int_XJ|f|^2d\mu\\
&=\|M_{J^{\frac{1}{2}}}f\|^2.
\end{align*}
This implies that $W$ has closed range if and only if $M_{J^{\frac{1}{2}}}$ has closed range on $L^2(\mu)$. On the other hands, we have $W^*W=M_J$. Also, by part (a) of the Lemma \ref{l1.1} we get that $W^*$ has closed range if and only if $W^*W$ has closed range. Hence $W^*$ has closed range if and only if $M_J$ has closed range. As is known in the literature \cite{tak}, the multiplication operator $M_J$ on $L^2(\mu)$ has closed range if and only if $J\geq \delta$, on $S(J)$, for some $\delta>0$. By these observations we get the results.
\end{proof}
As we know the powers of weighted composition operators are again weighted composition operators. By considering this property we find that the powers of a closed range weighted composition operators have closed range too.
\begin{thm}
If $W$ is a bounded weighted composition operator on $L^2(\mu)$ with closed range and $\varphi^{-1}(S(J))\subseteq S(J)$, then there exists $\delta>0$ such that $J_n\geq \delta^n$, on $S(J)$, for all $n\in \mathbb{N}$. In case $S(J)=X$, for all $n\in \mathbb{N}$, the operator $W^n$ and equivalently $W^{*^n}$ have closed range.
\end{thm}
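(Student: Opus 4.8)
The plan is to prove the quantitative bound $J_n\geq\delta^n$ on $S(J)$ first, and then read off the closed-range statement for $W^n$ as a consequence. For the reduction, recall that $W^n=M_{u_n}C_{\varphi^n}$ is again a weighted composition operator with $(W^n)^*W^n=M_{J_n}$, so Proposition \ref{p1.2} applied with $W^n$ in place of $W$ asserts that $W^n$ (and equivalently $W^{*n}$) has closed range precisely when $J_n\geq\delta'$ on $S(J_n)$ for some $\delta'>0$. When $S(J)=X$ the bound $J_n\geq\delta^n$ holds on all of $X$, so $J_n>0$ a.e., $S(J_n)=X$, and the criterion is met with $\delta'=\delta^n$; thus the second assertion follows at once from the first.

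For the first assertion I would argue by induction on $n$, using the recursion $J_n=hE(J_{n-1}|u|^2)\circ\varphi^{-1}$ established above together with the bound $J\geq\delta$ on $S(J)$ coming from closedness of the range of $W$ (Proposition \ref{p1.2}). The base case $n=1$ is exactly $J_1=J\geq\delta$ on $S(J)$. For the inductive step, assuming $J_{n-1}\geq\delta^{n-1}$ on $S(J)$, the goal is to establish $E(J_{n-1}|u|^2)\geq\delta^{n-1}E(|u|^2)$ on $\varphi^{-1}(S(J))$; granting this, passing to base functions via $\circ\varphi^{-1}$ (which is order preserving) and multiplying by $h\geq 0$ yields $J_n\geq\delta^{n-1}\,hE(|u|^2)\circ\varphi^{-1}=\delta^{n-1}J$ on $S(J)$, and a final application of $J\geq\delta$ on $S(J)$ gives $J_n\geq\delta^{n-1}\cdot\delta=\delta^n$ there.

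The crux, and the step where the hypothesis $\varphi^{-1}(S(J))\subseteq S(J)$ is indispensable, is a localized monotonicity of the conditional expectation. The set $\varphi^{-1}(S(J))$ is a union of full $\varphi$-fibers and is $\varphi^{-1}(\mathcal{F})$-measurable, so its indicator factors out of $E$. By the forward-invariance hypothesis $\varphi^{-1}(S(J))\subseteq S(J)$, the inductive bound $J_{n-1}\geq\delta^{n-1}$ is valid on all of $\varphi^{-1}(S(J))$, so $(J_{n-1}-\delta^{n-1})|u|^2\,\chi_{\varphi^{-1}(S(J))}\geq 0$ a.e.; applying the monotone operator $E$ and pulling out $\chi_{\varphi^{-1}(S(J))}$ gives $E(J_{n-1}|u|^2)\geq\delta^{n-1}E(|u|^2)$ exactly on $\varphi^{-1}(S(J))$, which is all that is needed, since $J_n$ is only being estimated on $S(J)$ and each fiber over $S(J)$ lies inside $\varphi^{-1}(S(J))$.

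I expect the main obstacle to be the careful bookkeeping of supports and of the operation $\circ\varphi^{-1}$: one must verify that evaluating $E(J_{n-1}|u|^2)\circ\varphi^{-1}$ at a point of $S(J)$ only invokes values of $J_{n-1}$ on the fiber above it, and that this fiber sits inside $S(J)$ thanks to $\varphi^{-1}(S(J))\subseteq S(J)$. Without this invariance the inductive lower bound on $J_{n-1}$, known only on $S(J)$, could fail to control the fiber average and the induction would collapse; with it, the monotonicity of $E$ propagates the bound cleanly from stage $n-1$ to stage $n$.
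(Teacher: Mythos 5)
Your proposal is correct and follows essentially the same route as the paper: induction on $n$ through the recursion $J_n=hE(J_{n-1}|u|^2)\circ\varphi^{-1}$, with the base bound $J\geq\delta$ on $S(J)$ from Proposition \ref{p1.2}, the hypothesis $\varphi^{-1}(S(J))\subseteq S(J)$ used exactly where you place it, and Proposition \ref{p1.2} applied to $W^n=M_{u_n}C_{\varphi^n}$ when $S(J)=X$. The only difference is cosmetic: the paper runs the inductive step in integrated form, showing $\int_A J_n\,d\mu\geq\delta^n\mu(A)$ for every measurable $A\subseteq S(J)$ via a change of variables, whereas you argue pointwise using monotonicity of $E$ and the $\varphi^{-1}(\mathcal{F})$-measurability of $\chi_{\varphi^{-1}(S(J))}$ — the two executions are equivalent.
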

\begin{proof}
If $W$ has closed range, then by the Proposition \ref{p1.2} we get that 
$$J_1=J\geq \delta, \ \ \mu \  \text{,a.e.,}\ \ \ \ \text{on}\ \ S(J), \ \ \text{for some} \ \ \delta>0.$$
Let $A\in \mathcal{F}$ and $A\subseteq S(J)$, then 
\begin{align*}
\int_AJ_2d\mu&=\int_X \chi_AhE(J|u|^2)\circ\varphi^{-1}d\mu\\
&=\int_X (\chi_{A}\circ\varphi) J|u|^2d\mu\\
&\geq \delta\int_X(\chi_A\circ\varphi)|u|^2d\mu\\
&=\delta\int_AJd\mu\\
&\geq\int_A\delta^2d\mu.
\end{align*}
Hence $J_2\geq \delta^2$, $\mu$, a.e., on $S(J)$. 
  
Similarly and by induction we get that $J_n\geq \delta^n$, a.e., $\mu$, on $S(J)$, for all $n\in \mathbb{N}$ and in case $S(J)=X$, by the Proposition \ref{p1.2}, the operator $W^n=M_{u_n}C_{\varphi^n}$ and equivalently $W^{*^n}=W^{n^*}$ have closed range.
\end{proof}
Now in the next theorem we characterize closed range $\lambda$-hyponormal weighted composition operators on $L^2(\mu)$.
\begin{thm}\label{t3.4}
Let the weighted composition operator $W=M_uC_{\varphi}$ has closed range on $L^2(\mu)$ and $J=hE(|u|^2)\circ\varphi^{-1}$. Then $W$ is $\lambda$-hyponormal for some $\lambda>0$ if and only if $S(u)\subseteq S(J)$.
\end{thm}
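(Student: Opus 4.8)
The plan is to reduce the entire statement to a range–inclusion condition via Douglas's Theorem. First I would record that $W^*W=M_J$ and, since $J\geq 0$ and $M_{\sqrt J}$ is self-adjoint, that $M_J=M_{\sqrt J}M_{\sqrt J}^*$. Consequently the $\lambda$-hyponormality inequality $W^*W\geq \frac{1}{\lambda}WW^*$ is precisely $WW^*\leq \lambda\, M_{\sqrt J}M_{\sqrt J}^*$. Applying Theorem \ref{t1.1} with $A=M_{\sqrt J}$ and $B=W$ then shows that $W$ is $\lambda$-hyponormal for some $\lambda>0$ if and only if $\mathcal{R}(W)\subseteq \mathcal{R}(M_{\sqrt J})$.

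Next I would exploit the closed-range hypothesis. By Proposition \ref{p1.2} the closedness of $\mathcal{R}(W)$ gives $J\geq \delta$ a.e.\ on $S(J)$ for some $\delta>0$, so $M_{\sqrt J}$ also has closed range and in fact $\mathcal{R}(M_{\sqrt J})=L^2(S(J))$ (given $g\in L^2(S(J))$, the function equal to $g/\sqrt J$ on $S(J)$ and $0$ elsewhere lies in $L^2(\mu)$ and is sent to $g$). Thus the criterion from the first step becomes: $W$ is $\lambda$-hyponormal for some $\lambda>0$ if and only if $\mathcal{R}(W)\subseteq L^2(S(J))$. The forward implication is now immediate: if $S(u)\subseteq S(J)$, then since $Wf=u\,(f\circ\varphi)$ vanishes off $S(u)$ we have $\mathcal{R}(W)\subseteq L^2(S(u))\subseteq L^2(S(J))=\mathcal{R}(M_{\sqrt J})$, and Douglas's Theorem supplies the required $\lambda$.

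For the converse I would argue at the level of kernels. If $W$ is $\lambda$-hyponormal and $Wh=0$, then $0=\langle W^*Wh,h\rangle\geq \frac{1}{\lambda}\langle WW^*h,h\rangle=\frac{1}{\lambda}\|W^*h\|^2\geq 0$, whence $W^*h=0$; hence $Ker(W)\subseteq Ker(W^*)$. By part (b) of Lemma \ref{l1.1} this is equivalent to $S(u)\subseteq S(J)$, completing the equivalence. The only genuinely delicate point is the first step: Douglas's Theorem yields inclusion of the \emph{actual} ranges, so it is essential to invoke the closed-range hypothesis through Proposition \ref{p1.2} in order to identify $\mathcal{R}(M_{\sqrt J})$ with the full space $L^2(S(J))$ rather than merely a dense subspace; without this, the inclusion $\mathcal{R}(W)\subseteq L^2(S(J))$ could not be concluded from $S(u)\subseteq S(J)$ alone.
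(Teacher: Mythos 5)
Your proof is correct, and although it runs on the same two engines as the paper's argument --- Douglas's Theorem \ref{t1.1} and part (b) of Lemma \ref{l1.1} --- it applies Douglas to a genuinely different pair of operators. The paper takes $A=W^*$, $B=W$, so that $\lambda$-hyponormality becomes $\mathcal{R}(W)\subseteq\mathcal{R}(W^*)$; it then observes that $Ker(W)\subseteq Ker(W^*)$ is equivalent to $\overline{\mathcal{R}(W)}\subseteq\overline{\mathcal{R}(W^*)}$, and uses Proposition \ref{p1.2} (closedness of $\mathcal{R}(W)$ is equivalent to closedness of $\mathcal{R}(W^*)$) to upgrade the inclusion of closures to an inclusion of actual ranges, finishing with Lemma \ref{l1.1}(b); both directions pass through this single chain of equivalences. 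You instead factor $W^*W=M_J=M_{\sqrt J}M_{\sqrt J}^*$ and take $A=M_{\sqrt J}$, which buys you something the paper's route does not: the comparison range becomes concretely computable, $\mathcal{R}(M_{\sqrt J})=L^2(S(J))$, once Proposition \ref{p1.2} supplies $J\geq\delta$ a.e.\ on $S(J)$, so the forward implication reduces to the one-line support observation $\mathcal{R}(W)\subseteq L^2(S(u))$. (One small point worth a line in a final write-up: Douglas's Theorem requires $M_{\sqrt J}\in\mathcal{B}(L^2(\mu))$, which holds because $W^*W=M_J$ bounded forces $J\in L^\infty(\mu)$.) Your converse is also more elementary and in fact slightly stronger than the paper's: the positivity computation $0=\langle W^*Wh,h\rangle\geq\frac{1}{\lambda}\|W^*h\|^2$ extracts $Ker(W)\subseteq Ker(W^*)$ directly from the operator inequality, without Douglas and without the closed-range hypothesis, so your argument shows the ``only if'' direction of the theorem holds for an arbitrary bounded $W=M_uC_{\varphi}$, with closedness of the range needed only for ``if'' --- precisely the point your closing remark isolates, and consistent with Theorem \ref{t3.5}, where $S(u)\subseteq S(J)$ appears as a necessary condition with no range assumption.
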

\begin{proof} As is known we have 
$$Ker(W)=\overline{\mathcal{R}(W^*)}^{\perp}, \ \ \ \ Ker(W^*)=\overline{\mathcal{R}(W)}^{\perp}.$$
Hence $Ker(W)\subseteq Ker(W^*)$ if and only if $Ker(W^*)^{\perp}\subseteq Ker(W)^{\perp}$ if and only if $\overline{\mathcal{R}(W)}\subseteq \overline{\mathcal{R}(W^*)}$. On the other hands the by the Proposition \ref{p1.2} and the assumption that $W$ has closed range, we get that $Ker(W)\subseteq Ker(W^*)$ if and only if $\mathcal{R}(W) \subseteq \mathcal{R}(W^*)$. By the Lemma \ref{l1.1}, part (b) and the Theorem \ref{t1.1} we get that $S(u)\subseteq S(J)$ if and only if there exists $\lambda>0$ such that $WW^*\leq \lambda W^*W$. Therefore we have the results.

\end{proof}

\begin{thm}\label{t3.5} For $\lambda>0$, the weighted composition operator $W=M_uC_{\varphi}$ is $\lambda$-hyponormal on $L^2(\mu)$ if and only if $S(u)\subseteq S(J)$ and $h\circ\varphi\left(E\left[\frac{u^2}{J}\right]\right)\leq \lambda$.
\end{thm}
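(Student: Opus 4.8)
The plan is to reduce the operator inequality defining $\lambda$-hyponormality, namely $WW^*\le\lambda W^*W$, to a scalar integral inequality and then to a pointwise one. Testing against an arbitrary $f\in L^2(\mu)$, $\lambda$-hyponormality is equivalent to $\|W^*f\|^2\le\lambda\|Wf\|^2$ for all $f$. The right-hand side is already known from Proposition \ref{p1.2} to equal $\lambda\int_X J|f|^2\,d\mu$. For the left-hand side I would compute $\|W^*f\|^2=\langle WW^*f,f\rangle=\int_X u(h\circ\varphi)E(uf)\,\bar f\,d\mu$ using the formula $WW^*f=u(h\circ\varphi)E(uf)$ recorded in the preliminaries, and then simplify it with the averaging identity $\int_X g_1E(g_2)\,d\mu=\int_X E(g_1)E(g_2)\,d\mu$ together with the fact that $h\circ\varphi$ is $\varphi^{-1}(\mathcal{F})$-measurable and so factors out of $E$; since $u$ is real, $E(u\bar f)=\overline{E(uf)}$. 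This should collapse the left-hand side to the clean expression $\int_X (h\circ\varphi)\,|E(uf)|^2\,d\mu$. Thus $W$ is $\lambda$-hyponormal if and only if
$$\int_X (h\circ\varphi)\,|E(uf)|^2\,d\mu\le\lambda\int_X J|f|^2\,d\mu\qquad\text{for all }f\in L^2(\mu).$$

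For the forward direction I would first extract $S(u)\subseteq S(J)$ essentially for free. Since $W^*W\ge\frac1\lambda WW^*\ge0$, any $f$ with $W^*Wf=0$ satisfies $0\le\frac1\lambda\langle WW^*f,f\rangle\le\langle W^*Wf,f\rangle=0$, whence $WW^*f=0$; that is, $Ker(W)=Ker(W^*W)\subseteq Ker(WW^*)=Ker(W^*)$, and Lemma \ref{l1.1}(b) then yields $S(u)\subseteq S(J)$ (in particular $u^2/J$ is a.e.\ well defined). To obtain the pointwise bound I would feed the extremal test functions $f=\frac{u\psi}{J}$, with $\psi$ a bounded $\varphi^{-1}(\mathcal{F})$-measurable function supported where everything is integrable, into the integral inequality. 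A short computation gives $E(uf)=\psi\,E(u^2/J)$ and $\int_X J|f|^2\,d\mu=\int_X E(u^2/J)|\psi|^2\,d\mu$, while the left-hand side becomes $\int_X (h\circ\varphi)\,E(u^2/J)^2|\psi|^2\,d\mu$. The inequality then reads $\int_X[(h\circ\varphi)E(u^2/J)-\lambda]\,E(u^2/J)\,|\psi|^2\,d\mu\le0$ for every admissible $\psi$, and letting $\psi$ range over such functions forces $(h\circ\varphi)E(u^2/J)\le\lambda$ a.e.

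For the converse I would assume both conditions and run a conditional Cauchy--Schwarz estimate. Because $S(u)\subseteq S(J)$, we may write $u=\frac{u}{\sqrt J}\,\sqrt J$ (with the convention $u/\sqrt J=0$ off $S(J)$), so that $uf=(u/\sqrt J)(\sqrt J f)$ and the conditional Cauchy--Schwarz inequality gives $|E(uf)|^2\le E(u^2/J)\,E(J|f|^2)$. Integrating against $h\circ\varphi$, using the hypothesis $(h\circ\varphi)E(u^2/J)\le\lambda$ pointwise, and then $\int_X E(J|f|^2)\,d\mu=\int_X J|f|^2\,d\mu$, delivers $\int_X (h\circ\varphi)|E(uf)|^2\,d\mu\le\lambda\int_X J|f|^2\,d\mu$, which is exactly $\lambda$-hyponormality.

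The step I expect to be the main obstacle is the reduction in the first paragraph: establishing the identity $\langle WW^*f,f\rangle=\int_X(h\circ\varphi)|E(uf)|^2\,d\mu$ rigorously, since it requires careful bookkeeping of the $\circ\varphi^{-1}$ operation and of which functions are $\varphi^{-1}(\mathcal{F})$-measurable, and checking that the averaging and self-adjointness identities for $E$ apply to the (possibly complex) integrands. Once that clean quadratic form is in hand, both the extremal-test-function argument and the Cauchy--Schwarz argument are routine, the only remaining care being the a.e.\ qualifiers and the integrability of the chosen test functions.
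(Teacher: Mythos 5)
Your proposal is correct and follows essentially the route the paper intends: the paper's own proof of Theorem \ref{t3.5} is only a one-line deferral to Theorem 3.1 of \cite{lam}, whose argument is exactly your reduction of $WW^*\leq \lambda W^*W$ to the quadratic-form inequality $\int_X (h\circ\varphi)|E(uf)|^2\,d\mu\leq \lambda\int_X J|f|^2\,d\mu$, followed by extracting $S(u)\subseteq S(J)$ from $Ker(W)\subseteq Ker(W^*)$ via Lemma \ref{l1.1}(b), testing with functions of the form $u\psi/J$, and the conditional Cauchy--Schwarz estimate for the converse. In effect you have written out the ``minor changes'' to Lambert's proof that the paper leaves to the reader, including the truncation/integrability bookkeeping (restricting to sets where $J$ is bounded below and of finite measure) that you rightly flag as the only delicate point.
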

\begin{proof}
By making minor changes in the proof of Theorem 3.1 of \cite{lam}, we easily get the proof.
\end{proof}
\begin{cor}\label{cor3.6}
If $S(u)\subseteq S(J)$ and $h\circ \varphi E(\frac{|u|^2}{J})\leq \lambda$, with $0<\lambda\leq 1$, then $W=M_uC_{\varphi}$ is not weakly hypercyclic.
\end{cor}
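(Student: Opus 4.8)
The plan is to read this corollary as a direct synthesis of two results already established in the paper: the characterization of $\lambda$-hyponormality for weighted composition operators in Theorem \ref{t3.5}, and the non-hypercyclicity of $\lambda$-hyponormal operators with $0<\lambda\leq 1$ in Theorem \ref{t2.4n}. First I would observe that the two stated hypotheses, namely $S(u)\subseteq S(J)$ and $h\circ\varphi\,E(|u|^2/J)\leq\lambda$, are exactly the pair of conditions appearing in Theorem \ref{t3.5}. Since $W=M_uC_{\varphi}$ is a bounded operator on the Hilbert space $L^2(\mu)$, that theorem applies and tells us these two conditions together are equivalent to $W$ being $\lambda$-hyponormal on $L^2(\mu)$ for the given value of $\lambda$. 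Thus the hypotheses immediately yield that $W$ is $\lambda$-hyponormal.

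Second, I would invoke the restriction $0<\lambda\leq 1$ carried in the hypothesis. Having established that $W$ is a bounded $\lambda$-hyponormal operator on $L^2(\mu)=\mathcal{H}$ with $0<\lambda\leq 1$, the operator $W$ satisfies exactly the standing assumptions of Theorem \ref{t2.4n}. Applying that theorem directly gives that $W$ is not weakly hypercyclic, which is the desired conclusion.

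The proof is therefore essentially a bookkeeping argument, and I do not expect any substantive obstacle. The only point requiring care is to confirm that the quantities entering the hypotheses match those in Theorem \ref{t3.5} verbatim---that $J=hE(|u|^2)\circ\varphi^{-1}$ is the same auxiliary function, that $E$ is the same conditional expectation operator with respect to $\varphi^{-1}(\mathcal{F})$, and that the squared-modulus convention $u^2$ versus $|u|^2$ agrees (under the paper's standing assumption that $u$ is a positive measurable function these coincide). Once this identification is made, the two theorems chain together with no further estimation, and the corollary follows at once.
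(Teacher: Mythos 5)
Your proposal is correct and matches the paper's own proof exactly: the paper also derives the corollary by combining Theorem \ref{t3.5} (the hypotheses are equivalent to $\lambda$-hyponormality of $W$) with Theorem \ref{t2.4n} (no $\lambda$-hyponormal operator with $0<\lambda\leq 1$ is weakly hypercyclic). Your additional care about matching $J$, $E$, and the $u^2$ versus $|u|^2$ convention is a reasonable extra check but involves no new mathematical content.
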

\begin{proof}
By Theorems \ref{t2.4n} and \ref{t3.5} we get the proof.
\end{proof}

\begin{exam} Let $X=[0, \frac{1}{2}]$, $d\mu=dx$ and $\Sigma$
be the Lebesgue sets. Take $u(x)=\sqrt{x^3}$ and $\varphi(x)=x^2$. 
 It follows that, $\varphi^{-1}(x)=\sqrt{x}$, $h(x)=\frac{1}{2\sqrt{x}}$ and for every $a,b\in [0,1]$ we have 
 $$\int_{\varphi^{-1}(a,b)}f(x)dx=\int_{\sqrt{a}}^{\sqrt{b}}f(x)dx=\int_{a}^{b}\frac{f(\sqrt{x})}{2\sqrt{x}}dx.$$
 This means that $E(f)\circ\varphi^{-1}(x)=\frac{f(\sqrt{x})}{2\sqrt{x}}$. Hence 
 $$J(x)=h(x)E(|u|^2)\circ\varphi^{-1}(x)=\frac{\sqrt{x}}{4}.$$
 By these observations we get that $h\circ \varphi(x) E(\frac{|u|^2}{J})(x)=2\sqrt{x^3}\leq 1$, for all $x\in X$. Hence by Corollary \ref{cor3.6} we have that $W=M_uC_{\varphi}$ is not weakly hypercyclic on the Hilbert space $L^2(\mu)$. Moreover, by Theorem \ref{t3.4} we get that $M_uC_{\varphi}$ is $\lambda$-hypnormal on $L^2(\mu)$, for all $\lambda\geq 1$.

\end{exam}

\begin{exam}
Let $m=\{m_n\}^{\infty}_{n=1}$ 
be a sequence of positive real numbers.
Consider the space $l^2(\mu)=L^2(\mathbb{N},2^{\mathbb{N}}, \mu)$, where $2^{\mathbb{N}}$ is the power set of natural
numbers and $\mu$ is a measure on $2^{\mathbb{N}}$ defined by $\mu(\{n\})=m_n$. Let $u=\{u_n\}^{\infty}_{n=1}$ be a sequence of non-negative real numbers. Let $\varphi:\mathbb{N}\rightarrow \mathbb{N}$ be a non-singular
measurable transformation. Direct computation shows that
$$h(k)=\frac{1}{m_k}\sum_{j\in \varphi^{-1}(k)}m_j, \ \  E^{\varphi}(f)(k)=\frac{\sum_{j\in \varphi^{-1}(\varphi(k))}f_jm_j}{\sum_{j\in \varphi^{-1}(\varphi(k))}m_j},$$
for all non-negative sequence $f=\{f_n\}^{\infty}_{n=1}$ and $k\in \mathbb{N}$. Since $S(J)=S(h)\cap S(E(\varphi(u)))$, then by Theorem \ref{t3.4} we get that $M_uC_{\varphi}$ is $\lambda$-hypnormal on $l^2(\mu)$ if and only if
$S(u)\subseteq \{k\in \varphi(\mathbb{N}):u(\varphi^{-1}(\varphi(k)))\neq\{0\}\}$ and

$$h\circ\varphi(k)E(\frac{u^2}{J})(k)=\frac{1}{m_k}\sum_{j\in \varphi^{-1}(\varphi(k))}\frac{u(j)^2m_j}{J(j)}\leq \lambda$$
on $S(u)$, where for each $j\in \mathbb{N}$,

$$J(j)=\frac{1}{m_j}\sum_{i\in \varphi^{-1}(j)}u(i)^2m_i\leq M$$
for some $M<\infty$.\\
Consequently, if there exists $\lambda\leq 1$ such that 
$$h\circ\varphi(k)E(\frac{u^2}{J})(k)=\frac{1}{m_k}\sum_{j\in \varphi^{-1}(\varphi(k))}\frac{u(j)^2m_j}{J(j)}\leq \lambda$$
on $S(u)$, then $W=M_uC_{\varphi}$ is not weakly hypercyclic on the sequence space $l^2(\mu)$.

\end{exam}

\end{document}